\newtheorem{theorem}{Theorem}
\newtheorem{lemma}[theorem]{Lemma}
\newtheorem{problem}[theorem]{Problem}
\begin{document}
\title{On a problem of Erd\H os and Moser}
\date{}
\author{
B\'ela Bollob\'as
\footnote{Department of Pure Mathematics and Mathematical Statistics, Wilberforce Road,
Cambridge CB3 0WB, UK; {\em and} Department of Mathematical Sciences, University of Memphis,
Memphis TN38152, USA; {\em and} London Institute for Mathematical Sciences, 35a South
St, Mayfair, London W1K 2XF, UK; email: bb12@cam.ac.uk. Research supported in part
by NSF grant ITR 0225610; and by MULTIPLEX no. 317532.
} 
\and
Alex Scott
\footnote{Mathematical Institute, University of Oxford, Andrew Wiles Building, Radcliffe Observatory
Quarter, Woodstock Road, Oxford OX2 6GG, UK; email: scott@maths.ox.ac.uk.
}
}
\maketitle

\begin{abstract}
A set $A$ of vertices in an $r$-uniform hypergraph $\mathcal H$ is {\em covered in $\mathcal H$} if there is some vertex $u\not\in A$ such that every edge of the form $\{u\}\cup B$, $B\in A^{(r-1)}$ is in $\mathcal H$.
Erd\H os and Moser (1970) determined the minimum number of edges in a graph on $n$ vertices such that every $k$-set is covered.  
We extend this result to $r$-uniform hypergraphs on sufficiently many vertices, and determine the extremal hypergraphs.
We also address the problem for directed graphs.
\end{abstract}

\section{Introduction}

Let $\mathcal H$ be an $r$-uniform hypergraph with vertex set $X$.  We say that a set $A\subset X$ is {\em covered in $\mathcal H$} if there is $u\not\in A$ such that, for every 
$B\in A^{(r-1)}$ we have $B\cup\{u\}\in\mathcal H$; we say that {\em $u$ covers $A$ (in $\mathcal H$)}.
Let $f(n,k,r)$ be the minimum number of edges in an $r$-uniform hypergraph $\mathcal H$ with vertex set $[n]=\{1,\dots,n\}$ 
such that every $k$-set in $[n]$ is covered in $\mathcal H$.   

The problem of determining $f(n,k,r)$ for graphs ($r=2$) was raised by Erd\H os and Moser in \cite{EM64}.  
Subsequently, they proved in \cite{EM70} that, for $r=2$ and all $n>k$,
$$ f(n,k,2)= (k-1)(n-1) - \binom{k-1}{2} + \left\lceil\frac{n-k+1}{2}\right\rceil,$$
with a unique extremal graph.
Our aim in this paper is to generalise this to larger values $r$: for fixed $k,r$ and sufficiently large $n$, we will determine the value of $f(n,k,r)$ and find the extremal graphs.

We begin with a construction giving an upper bound on $f(n,k,r)$.
Let 
$D(n,r)$
be the minimum number $|\mathcal H|$ of edges in an $r$-uniform hypergraph $\mathcal H$ with vertex set $[n]$ such that every $(r-1)$-set  is contained in some element of $\mathcal H$. 
It is clear that $D(n,r)\ge\binom{n}{r-1}/\binom{r}{r-1}$.  In fact, this bound is close to optimal: for fixed $r$, we have
$$D(n,r)=(1+o(1))\binom{n}{r-1}\bigg/\binom{r}{r-1}\sim n^{r-1}/r!.$$
This follows as a (very) special case of an important result of R\"odl \cite{R85}.
There is also a simple construction (pointed out to us by Noga Alon): 
for $c\in[n]$, consider the set $\{A\in[n]^{(r)}:\sum_{a\in A}a\equiv c\pmod n\}$.  This covers all $(r-1)$-sets except those $B$ for which $c-\sum_{b\in B}\in B \pmod n$. 
There are only $O(n^{r-2})$ of these, so we can cover them one at a time.

Now for $n>k\ge r-1$, let $\mathcal G(n,k,r)$ be the family of $r$-uniform hypergraphs $\mathcal G$ on vertex set $[n]$ that can be constructed as follows:
\begin{itemize}
\item add all edges that meet any of the vertices $\{1,\dots,k-r+1\}$;
\item on vertices $\{k-r+2,\dots,n\}$, we add a collection of $D(n-k+r-1,r)$ edges such that every $(r-1)$-set in $\{k-r+2,\dots,n\}$ is contained in at least one of these edges.
\end{itemize}
Let us check that for $\mathcal H\in \mathcal G(n,k,r)$, every $k$-set $B$ of vertices is covered in $\mathcal H$:
if $[k-r+1]\not\subseteq B$ then any element $i$ of $[k-r+1]\setminus B$ covers $B$.  Otherwise, 
$B\supseteq [k-r+1]$, so $B\setminus [k-r+1]$ has size $r-1$ and is therefore contained in some 
edge $C$ added in the second step of the construction: the unique element $u$ of $C\setminus B$ then covers $B$.
We note that our construction generalizes that of Erd\H os and Moser: for $r=2$, $\mathcal G(n,k,r)$ consists of a unique graph up to isomorphism.

Let $g(n,k,r)$ be the number of edges in each element of $\mathcal G(n,k,r)$, so
\begin{align}
g(n,k,r)
&=\binom{n}{r}-\binom{n-k+r-1}{r} + D(n-k+r-1,r)\label{gextremal}\\
&= (k-r+1+1/r+o(1))\frac{n^{r-1}}{(r-1)!}.\notag
\end{align}
Furthermore, if there exists a Steiner system with parameters $(n-k+r-1,r,r-1)$ (in other words, a perfect covering 
of $(r-1)$-sets by $r$-sets) then
\begin{equation}\label{gexact}
g(n,k,r)=\binom{n}{r}-\binom{n-k+r-1}{r} + \binom{n-k+r-1}{r-1}/r.
\end{equation}
By a recent major result of Keevash \cite{K14}, such designs exist for infinitely many values of $n-k+r+1$, and so 
\eqref{gexact} holds for infinitely many values of $n$.

The main result of this paper is to show that, for fixed $k,r$ and all sufficiently large $n$, the construction above is optimal (and that all hypergraphs of minimal size can be constructed in this way).

\begin{theorem}\label{maintheorem}
For every $k,r$ there is an integer $n_0(k,r)$ such that for all $n>n_0$, 
$$f(n,k,r)=g(n,k,r).$$  
Furthermore, if $\mathcal H$ is an $r$-uniform hypergraph with $n$ vertices and $f(n,k,r)$ edges 
in which every $k$-set of vertices is covered, then $\mathcal H$ is isomorphic to some element of $\mathcal G(n,k,r)$.
\end{theorem}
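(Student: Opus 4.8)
\emph{Proof proposal.} The construction above already gives $f(n,k,r)\le g(n,k,r)$, so it suffices to prove, for $n$ large, the statement $(\star)$: \emph{every $r$-uniform $\mathcal H$ on $[n]$ in which each $k$-set is covered and with $|\mathcal H|\le g(n,k,r)$ is isomorphic to a member of $\mathcal G(n,k,r)$}. This yields $f(n,k,r)\ge g(n,k,r)$ and the uniqueness clause simultaneously. I would prove $(\star)$ by induction on $k$, with $n_0(k,r)$ chosen larger than $n_0(k-1,r)$ and large enough for the estimates below. The base case $k=r-1$ is immediate: a $(r-1)$-set is covered exactly when it lies in an edge, so $\mathcal H$ must be a minimum covering of all $(r-1)$-sets by $r$-sets, and $\mathcal G(n,r-1,r)$ is by definition the family of all such.

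For the inductive step the engine is: \emph{for every vertex $v$, the hypergraph $\mathcal H-v$ covers every $(k-1)$-set of $[n]\setminus\{v\}$} — if $u$ covers $A\cup\{v\}$ then, restricting to the $(r-1)$-subsets of $A$, the same $u$ covers $A$ in $\mathcal H-v$ — so $|\mathcal H-v|\ge f(n-1,k-1,r)=g(n-1,k-1,r)$ by induction. From \eqref{gextremal} one gets the telescoping identity $g(n,k,r)-g(n-1,k-1,r)=\binom{n-1}{r-1}$. Hence, if $\mathcal H$ has a \emph{full-link} vertex $v$ (meaning $d(v)=\binom{n-1}{r-1}$, i.e.\ $B\cup\{v\}\in\mathcal H$ for every $(r-1)$-set $B\not\ni v$), then $|\mathcal H|=d(v)+|\mathcal H-v|\ge g(n,k,r)$, so equality holds everywhere; then $\mathcal H-v$ is extremal for $(n-1,k-1,r)$, hence lies in $\mathcal G(n-1,k-1,r)$ by induction, and re-attaching $v$ (all of whose edges are present) puts $\mathcal H$ in $\mathcal G(n,k,r)$. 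More generally, taking $W$ to be the set of \emph{all} full-link vertices, the same restriction argument shows the restriction $\mathcal H[[n]\setminus W]$ of $\mathcal H$ to $[n]\setminus W$ covers every $(k-|W|)$-set, and since every edge meeting $W$ is present one gets $|\mathcal H|=\bigl(\binom nr-\binom{n-|W|}r\bigr)+|\mathcal H[[n]\setminus W]|\ge\binom nr-\binom{n-|W|}r+g(n-|W|,k-|W|,r)=g(n,k,r)$, with equality again forcing $\mathcal H[[n]\setminus W]\in\mathcal G(n-|W|,k-|W|,r)$ and hence $\mathcal H\in\mathcal G(n,k,r)$. Thus everything reduces to showing that $\mathcal H$ has \emph{at least one} full-link vertex; it can have at most $k-r+1$, since $k-r+2$ full-link vertices are already met by $\binom nr-\binom{n-k+r-2}r>g(n,k,r)$ edges (using $D(m,r)\sim m^{r-1}/r!$).

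Two elementary facts feed the last step. First, every $(r-1)$-set $C$ has codegree $d(C):=|\{w:C\cup\{w\}\in\mathcal H\}|\ge k-r+2$: if not, a $k$-set containing $C$ together with all of its neighbourhood could not be covered, since any cover of it would have to lie in that neighbourhood. Secondly, a vertex $u$ covers precisely the $k$-sets $A$ with $A^{(r-1)}\subseteq L_u$, where $L_u=\{B:B\cup\{u\}\in\mathcal H\}$ is the link of $u$; taking the $(r-1)$-shadow of the family of these "$k$-cliques", which is contained in $L_u$, and applying Kruskal--Katona shows that an $(r-1)$-uniform hypergraph with $m$ edges has at most $\binom xk$ such $k$-cliques, where $\binom x{r-1}=m$.

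I expect \textbf{the main obstacle to be producing a full-link vertex}, which I would do by a stability argument. Since $\sum_u d(u)=r|\mathcal H|\le rg(n,k,r)$ and $\bigcup_u\{k\text{-cliques of }L_u\}=[n]^{(k)}$, the clique bound shows that no single vertex covers more than $\binom{n-1}{k}$ of the $\binom nk$ $k$-sets and that a low-degree vertex covers only $o(\binom nk)$; the aim is to bootstrap this into the structural conclusion that, apart from $k-r+1$ vertices of degree $(1-o(1))\binom{n-1}{r-1}$, every link is uniformly far from complete, after which the codegree bound and the restriction structure above can be used to upgrade ``almost full-link'' to ``full-link'' and to pin $\mathcal H[[n]\setminus W]$ down to a minimum covering design. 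The hard point is that the bare inequality $|\bigcup_u\,\cdot\,|\le\sum_u|\cdot|$ does not by itself exclude a population of medium-degree vertices that jointly cover $[n]^{(k)}$ as cheaply as the extremal configuration; controlling the overlaps of the covered families, and then carrying out the exact clean-up, is where the real work lies.
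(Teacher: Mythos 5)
Your reduction is sound as far as it goes: the telescoping identity $g(n,k,r)-g(n-1,k-1,r)=\binom{n-1}{r-1}$ is correct, and once one knows that an extremal $\mathcal H$ contains $k-r+1$ vertices each lying in \emph{all} $\binom{n-1}{r-1}$ possible edges, your peeling argument (or the paper's equivalent direct count) does finish the proof. But the step you defer --- producing even one full-link vertex --- is not a technical loose end; it is essentially the whole content of the theorem, and your sketch lacks the ingredients needed to supply it. Two separate things are missing. First, even the weaker claim that there are $k-r+1$ vertices of degree $(1-o(1))\binom{n-1}{r-1}$ needs more than the clique-counting/Kruskal--Katona observation you describe: the paper's Lemma \ref{lemmaa} obtains it by combining (i) the fact that every $r$-set must be covered at least $k-r+1$ times, (ii) Kruskal--Katona, and crucially (iii) a \emph{minimum-degree} lower bound $\delta(\mathcal H)\ge g(n-1,k-1,r-1)$, which comes from noticing that the link of a vertex is an $(r-1)$-uniform instance of the same problem. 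Point (iii) is why the paper inducts on $r$ rather than on $k$; your induction on $k$ controls $|\mathcal H-v|$ but gives no handle on $d(v)$, and without a minimum-degree bound the counting you propose cannot rule out exactly the ``population of medium-degree vertices'' you yourself flag as the difficulty.

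Second, upgrading ``degree $(1-\epsilon)\binom{n-1}{r-1}$'' to ``full link'' is a genuinely hard exactness step, and ``the codegree bound and the restriction structure can be used'' is not an argument. The paper handles it with Lemma \ref{lemmab}: if the family $\mathcal G$ of $r$-sets meeting the top $k-r+1$ vertices but \emph{missing} from $\mathcal H$ is nonempty, then every $r$-set of the remaining vertices containing the trace of a missing edge must be covered inside the low-degree part $\mathcal H_1$, and a projection argument via the Loomis--Whitney inequality converts this forced redundancy into a modification $\mathcal H_1'$ with at least as large a shadow and at least $\lambda|\mathcal G|$ fewer edges; rebuilding then yields a strictly smaller covering hypergraph, a contradiction. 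Nothing in your proposal plays the role of this shadow-preserving edge-saving lemma, and the codegree inequality $d(C)\ge k-r+2$ is far too weak to substitute for it (summed over all $(r-1)$-sets it gives only about $\tfrac{k-r+2}{r}\binom{n}{r-1}$ edges, well below $g(n,k,r)$). So the proposal correctly identifies the target but leaves both of the paper's main lemmas unproved.
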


We note that
Theorem \ref{maintheorem} implies that, for $n>n_0(k,r)$,
$$f(n,k,r)\ge\binom{n}{r}-\binom{n-k+r-1}{r} + \binom{n-k+r-1}{r-1}/r,$$
with equality for infinitely many values of $n$.

We will prove Theorem \ref{maintheorem} in the next section; further discussion, and results for directed graphs, can be found in the final section.  We use standard notation throughout: in particular, $[n]=\{1,\dots,n\}$, $X^{(r)}=\{A\subset X:|A|=r\}$ and, for a hypergraph $\mathcal H$, we write $|\mathcal H|$ for the number of edges in $\mathcal H$.

\section{Covering hypergraphs}

The upper bound in Theorem \ref{maintheorem} follows immediately from the construction given in the introduction.
We will prove the lower bound by an induction on $r$, for which we will need two lemmas: the first lemma shows that, for fixed $k,r$ and large $n$, an extremal hypergraph must have $k-r+1$ vertices with degree close to the maximum possible;
the second will allow us to show that if an extremal hypergraph has $k-r+1$ vertices of almost maximum possible degree then it must in fact belong to
$\mathcal G(n,k,r)$.  We will state and prove both lemmas, and then complete the proof of Theorem \ref{maintheorem} at the end of the section.

Recall that if $\mathcal H$ is an $r$-uniform hypergraph then the {\em shadow} $\partial\mathcal H$ of $\mathcal H$ is the $(r-1)$-uniform hypergraph consisting of all $(r-1)$-sets that are contained in some edge of $\mathcal H$. 
We will need a version of the Kruskal-Katona Theorem (\cite{K63}, \cite{K68}; see also \cite{B86}, p.~30).  It will be convenient to use it in the simplified form 
due to Lov\'asz \cite{L79}: for $x\in[r,\infty)$, if $\mathcal H$ is an $r$-uniform hypergraph with $\binom xr$ edges, then
$\partial \mathcal H$ has at least $\binom{x}{r-1}$ edges.

We can now state the first lemma.  This will be used in our inductive argument, so if $r>2$ we will be able to assume 
that Theorem \ref{maintheorem} holds for $r-1$.

\begin{lemma}\label{lemmaa}
Let $k\ge r\ge 2$ be fixed, and if $r>2$ suppose that Theorem \ref{maintheorem} holds with $r-1$ in place of $r$
(we do not assume anything if $r=2$).
Let $\epsilon>0$ and let $n$ be sufficiently large (depending on $k,r,\epsilon$).  Let $\mathcal H$ be an $r$-uniform hypergraph with $n$ vertices and $f(n,k,r)$ edges, and suppose that every $k$-set of vertices is covered in $\mathcal H$.  Then $\mathcal H$ has $k-r+1$ vertices of degree at least $(1-\epsilon)\binom{n-1}{r-1}$.
\end{lemma}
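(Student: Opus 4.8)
The plan is to argue by contradiction. Put $m=k-r+1$, let $v_1,\dots,v_m$ be the vertices of largest degree in $\mathcal H$, and set $T=\{v_1,\dots,v_m\}$; suppose $\deg v_m<(1-\epsilon)\binom{n-1}{r-1}$. Since each $\deg v_i\le\binom{n-1}{r-1}$, it suffices to derive a contradiction by proving $\sum_{i=1}^m\deg v_i\ge(m-\epsilon)\binom{n-1}{r-1}$, as this forces every $\deg v_i\ge(1-\epsilon)\binom{n-1}{r-1}$. So the real target is a sharp lower bound on the sum of the top $m$ degrees.

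First I would record two reductions. (i) For any $S\subseteq[n]$ with $r-1\le k-|S|$, the hypergraph $\mathcal H[[n]\setminus S]$ covers every $(k-|S|)$-set: given such a set $R$ disjoint from $S$, a coverer of the $k$-set $S\cup R$ in $\mathcal H$ lies outside $S$ and covers $R$ in $\mathcal H[[n]\setminus S]$. Taking $S=T$ (so $k-|S|=r-1$), "covering all $(r-1)$-sets" just says every $(r-1)$-set of $[n]\setminus T$ lies in an edge, whence $|\mathcal H[[n]\setminus T]|\ge D(n-m,r)=D(n-k+r-1,r)$. Combined with $|\{e\in\mathcal H:e\cap T\neq\emptyset\}|\ge\sum_{i=1}^m\deg v_i-\binom m2\binom{n-2}{r-2}$ (two vertices lie in at most $\binom{n-2}{r-2}$ common edges) and with $|\mathcal H|\le g(n,k,r)=\binom nr-\binom{n-k+r-1}{r}+D(n-k+r-1,r)$, this already yields the \emph{upper} bound $\sum_{i=1}^m\deg v_i\le\binom nr-\binom{n-k+r-1}{r}+O(n^{r-2})=m\binom{n-1}{r-1}+O(n^{r-2})$. (ii) For any vertex $v$, the link $\mathcal H_v$ is an $(r-1)$-uniform hypergraph on $n-1$ vertices in which every $(k-1)$-set is covered (a coverer in $\mathcal H$ of $A\cup\{v\}$ covers $A$ in $\mathcal H_v$). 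Hence the induction hypothesis (Theorem~\ref{maintheorem} for $r-1$) gives $\deg v=|\mathcal H_v|\ge f(n-1,k-1,r-1)=g(n-1,k-1,r-1)$, with equality only if $\mathcal H_v$ is isomorphic to a member of $\mathcal G(n-1,k-1,r-1)$ — which has exactly $m$ vertices of almost full degree $\binom{n-2}{r-2}$ and all other degrees $O(n^{r-3})$.

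The crux is the matching \emph{lower} bound $\sum_{i=1}^m\deg v_i\ge(m-\epsilon)\binom{n-1}{r-1}$, equivalently that the remaining $n-m$ degrees sum to at most $\big((r-1)m+1+\epsilon\big)\binom{n-1}{r-1}$. Summing (ii) over all $v$ gives $|\mathcal H|\ge\frac nr g(n-1,k-1,r-1)$, which has the right order $n^{r-1}$ but falls short of $g(n,k,r)$ by a term of order $n^{r-1}$; this deficit must be loaded onto the top $m$ vertices, which works once one shows that almost all links are essentially extremal, i.e. almost all degrees are $O(n^{r-2})$. To control this, write $\deg u=\binom{x_u}{r-1}$; by iterated Kruskal–Katona the number of $k$-sets covered by $u$ (the complete $(r-1)$-graphs on $k$ vertices inside $\mathcal H_u$) is at most $\binom{x_u}{k}$, so $\binom nk\le\sum_u\binom{x_u}{k}$. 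Since $d\mapsto\binom xk$ is convex in $d=\binom x{r-1}$ (the exponent $k/(r-1)$ exceeds $1$) while $\sum_u\deg u\le rg(n,k,r)=(rm+1+o(1))\binom{n-1}{r-1}$, the degree is forced to concentrate on $O(1)$ vertices; a first pass then produces $m$ vertices of degree at least $c\binom{n-1}{r-1}$ for some $c=c(k,r)>0$. One then feeds this approximate structure back in, using that a vertex of degree $(1-\epsilon')\binom{n-1}{r-1}$ has a link missing $\epsilon'\binom{n-1}{r-1}$ edges and so fails to cover a definite fraction of the $k$-sets — which the genuinely low-degree vertices (by the convexity/budget estimate) have too little covering capacity to absorb — in order to push $c$ up to $1-\epsilon$.

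The main obstacle, I expect, will be precisely this last bootstrap. Pure counting cannot do it: the budget $rg(n,k,r)$ comfortably supports a bounded number of vertices of near-maximum degree, and the Bonferroni, Kruskal–Katona, and convexity estimates each lose a constant or $\epsilon$-order factor, so one application only localises the degree to a constant fraction. Extracting the sharp constant $1-\epsilon$ — and hence the clean structural statement that feeds the second lemma — will require iterating against the induction hypothesis for $r-1$ and tracking the lower-order ($n^{r-2}$) terms carefully throughout, in particular checking that the $O(n^{r-2})$ errors in reduction (i) and in $g(n-1,k-1,r-1)$ do not swamp the $\epsilon\binom{n-1}{r-1}$ margin.
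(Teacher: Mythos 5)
Your framework matches the paper's in two respects: the substitution $d_i=\binom{x_i}{r-1}$ with Kruskal--Katona controlling covering capacity, and the use of the inductive hypothesis on the link $\mathcal H_v$ to get the minimum-degree bound $\delta(\mathcal H)\ge g(n-1,k-1,r-1)$ (your reduction (ii), which is exactly the paper's \eqref{deltabound} and is genuinely needed to close the final gap). But the central counting step you propose is too weak, and this is a real gap, not just an unfinished computation. You count $k$-sets covered \emph{at least once}, getting $\binom nk\le\sum_u\binom{x_u}{k}$. A single vertex of full degree already contributes $\binom{n-1}{k}=(1-o(1))\binom nk$ to the right-hand side, and the remaining $\binom{n-1}{k-1}$ $k$-sets can be absorbed by one further vertex of degree $c\binom{n-1}{r-1}$ for any fixed $c>0$. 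So this inequality is consistent with having only one or two vertices of large degree; no amount of convexity or budget bookkeeping can extract $k-r+1$ near-full-degree vertices from it, because the constraint simply does not encode the number $k-r+1$. You sense this ("pure counting cannot do it") and defer to a bootstrap, but the bootstrap is not carried out and, starting from this inequality, cannot be.

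The missing idea is a multiplicity statement at the level of $r$-sets: every $r$-set $R$ must be covered at least $k-r+1$ times, since otherwise one takes a $k$-set containing $R$ together with all of its (at most $k-r$) coverers, and any coverer of that $k$-set would also cover $R$. Combined with Kruskal--Katona (a vertex of degree $\binom{x}{r-1}$ covers at most $\binom{x}{r}$ $r$-sets), this gives $\sum_{i}\binom{x_i}{r}\ge(k-r+1)\binom nr$, a constraint in which the target number $k-r+1$ appears explicitly. Writing $\binom{x_i}{r}=\frac{x_i-r+1}{r}d_i$ and splitting the sum at index $k-r+1$ (where, under the contradiction hypothesis, $x_i\le(1-\eta+o(1))n$) and at $t=\lceil\log n\rceil^{r-1}$ (where $x_t=O(n/\log n)$), one deduces $\sum_{i<t}d_i\ge(k-r+1+\eta)\binom{n-1}{r-1}$; adding the tail bound $\sum_{i\ge t}d_i\ge(1+o(1))n\,\delta(\mathcal H)$ from your reduction (ii) then exceeds the total degree budget $r\,g(n,k,r)$. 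Without the multiplicity observation, the lower bound on $\sum_{i=1}^{k-r+1}d_i$ that your whole strategy hinges on is not attainable.
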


\begin{proof}

Let $V$ and $E$ be the vertex and edge sets of $\mathcal H$ respectively, and write $m=|E|=f(n,k,r)$. 
Note that $m\le g(n,k,r)$, which by \eqref{gextremal} is at most
$(k-r+1+1/r+o(1))n^{r-1}/(r-1)!=O(n^{r-1})$.
Let $d_1\ge d_2\ge\dots\ge d_n$ be the degree sequence of $\mathcal H$, and suppose that $d_{k-r+1}<(1-\epsilon)\binom{n-1}{r-1}$.  Define $\eta$ by $(1-\eta)^{r-1}=1-\epsilon$.

Let $v$ be a vertex of minimal degree in $\mathcal H$, and let $\mathcal H_v$ be the neighbourhood hypergraph of $v$, that is  the $(r-1)$-uniform hypergraph with vertex set $V\setminus v$, and edge set $\{e\setminus v: \mbox{$v\in e$ and $e\in E$}\}$.
Since every $k$-set is covered in $\mathcal H$, it follows that if $r\ge 3$ then every $(k-1)$-set is covered in $\mathcal H_v$
(for any $(k-1)$-set $A\subset V\setminus v$, the set $A\cup\{v\}$ is covered in $\mathcal H$ by some $u$;
then $u$ covers $A$ in $\mathcal H_v$).
Thus $\mathcal H_v$ has at least $f(n-1,k-1,r-1)$ edges.
It follows by assumption that 
$f(n-1,k-1,r-1)=g(n-1,k-1,r-1)$ for sufficiently large $n$, and so
$\mathcal H$ has minimal degree
\begin{align}
\delta(\mathcal H)&\ge g(n-1,k-1,r-1)\notag\\
&\sim (k-r+1+1/(r-1)+o(1))\frac{n^{r-2}}{(r-2)!}.\label{deltabound}
\end{align}
If $r=2$, we have directly that $\delta(\mathcal H)\ge k$ or else any $k$-set containing $v$ and all its neighbours is not covered in $\mathcal H$.  So \eqref{deltabound} holds for all $r\ge 2$.

For $i=1,\dots,n$, define a real number $x_i\in[r-1,\infty)$ by
\begin{equation}\label{xdef}
d_i=\binom{x_i}{r-1},
\end{equation}
so $x_1\ge\dots \ge x_n$.
By the Kruskal-Katona Theorem, a vertex $v$ that covers $\binom{x}{r}$ $r$-sets must have degree at least $\binom{x}{r-1}$ (since $\mathcal H_v$ must contain all $(r-1)$sets in the shadow of the $r$-sets covered by $v$).  So the $i$th vertex in $\mathcal H$ covers at most $\binom{x_i}{r}$ $r$-sets.

Each $r$-set $R$ in $V$ is covered at least $k-r+1$ times in $\mathcal H$, or else we could choose a $k$-set $S$ containing $R$ and all vertices that cover $R$, and then $S$ would not be covered in $\mathcal H$.  Counting all pairs $(R,u)$ such that $R$ is an $r$-set and $u$ covers $R$, we see that
\begin{equation}\label{xbound}
\sum_{i=1}^n\binom{x_i}{r}\ge (k-r+1)\binom{n}{r}.
\end{equation}
We have $x_1\le n-1$ and, as
$d_{k-r+1}<(1-\epsilon)\binom{n-1}{r-1}$, we have  
from \eqref{xdef} and the definition of $\eta$ that
$$x_{k-r+1}\le(1-\eta+o(1))n.$$ 
Let $t=\lceil \log n\rceil^{r-1}$.  Then, as $\sum_{i=1}^nd_i=rm$, we have
$d_t\le rm/t=O(n^{r-1}/t)$
and so
$$x_t=O(n/t^{1/(r-1)})=O(n/\log n).$$
Furthermore, for every $i$ we have
$$\binom{x_i}{r}=\frac{x_i-r+1}{r}\binom{x_i}{r-1}=\frac{x_i-r+1}{r}d_i.$$
Since the sequence $(x_i)_{i=1}^n$ is decreasing, it follows that
\begin{align*}
\sum_{i=1}^{n}\binom{x_i}{r}
&= \sum_{i=1}^{n}\frac{x_i-r+1}{r}d_i\\
&\le \frac{x_1-r+1}{r}\sum_{i=1}^{k-r}d_i+\frac{x_{k-r+1}-r+1}{r}\sum_{i=k-r+1}^{t-1}d_i
   +\frac{x_t-r+1}{r}\sum_{i=t}^n d_i\\
&\le \frac{n-r}{r}\sum_{i=1}^{k-r}d_i+\frac{(1-\eta+o(1))n}{r}\sum_{i=k-r+1}^{t-1}d_i
   +O(nm/\log n)
\end{align*}
Now $m=O(n^{r-1})$, so the last term is $o(n^r)$.  Suppose 
$\sum_{i=1}^{k-r}d_i=\alpha\binom{n-1}{r-1}$ and $\sum_{i=k-r+1}^{t-1}d=\beta\binom{n-1}{r-1}$.
Then by \eqref{xbound} and the bound on $\sum_{i=1}^n\binom{x_i}{r}$ above we must have
\begin{align*}
(k-r+1)\binom nr 
&\le \frac{n-r}{r}\alpha\binom{n-1}{r-1}+\frac{(1-\eta+o(1))n}{r}\beta\binom{n-1}{r-1}
   +o(n^r)\\
&=\alpha\binom{n-1}{r}+(1-\eta+o(1))\beta\binom{n-1}{r}+o(n^r)\\
&=(\alpha+(1-\eta)\beta+o(1))\binom{n}{r},
\end{align*}
and so $\alpha+(1-\eta)\beta\ge k-r+1+o(1)$.  However, $\alpha\le k-r$ (as $d_1\le\binom{n-1}{r-1}$) and so
$\beta\ge1/(1-\eta)+o(1)$, giving
$$\alpha+\beta\ge k-r+\frac{1}{1-\eta}+o(1)\ge k-r+1+\eta,$$
for large enough $n$.  So
\begin{equation}\label{sum1}
\sum_{i=1}^{t-1}d_i \ge (k-r+1+\eta)\binom{n-1}{r-1}.
\end{equation}
On the other hand, by \eqref{deltabound},
\begin{align}
\sum_{i=t}^nd_i
&\ge(1+o(1))n\delta(\mathcal H)\notag\\
&\ge\big(k-r+1+1/(r-1)+o(1)\big)\frac{n^{r-1}}{(r-2)!}\notag\\
&=\big((r-1)(k-r+1)+1\big)\frac{n^{r-1}}{(r-1)!} + o(n^{r-1}).\label{sum2}
\end{align}
Since $\sum_{i=1}^n d_i=mr$, \eqref{sum1} and \eqref{sum2} imply that
$$m\ge (k-r+1+1/r+\eta/r+o(1))\binom{n-1}{r-1},$$
which gives a contradiction for large enough $n$.  We conclude that we must have $d_{k-r+1}\ge (1-\epsilon)\binom{n-1}{r-1}$ if $n$ is sufficiently large, which implies the result immediately.
\end{proof}

Our second lemma will allow us to clean up the structure of a hypergraph that is close to extremal.

\begin{lemma}\label{lemmab}
For every $r\ge2$ and $\lambda>0$ there is $\epsilon>0$ such that the following holds for all sufficiently large $n$.
Let $\mathcal F$ and $\mathcal H$ be two hypergraphs with vertex set $[n]$ such that  
\begin{itemize}
\item  $\mathcal F$ is $(r-1)$-uniform and $|\mathcal F|<\epsilon n^{r-1}$;
\item $\mathcal H$ is $r$-uniform;
\item  every $A\in[n]^{(r)}$ that contains an element of $\mathcal F$ is covered in $\mathcal H$.
\end{itemize}
Then there is an $r$-uniform hypergraph $\mathcal H'$ with vertex set $[n]$ such that
\begin{itemize}
\item $|\mathcal H'|+\lambda|\mathcal F|<|\mathcal H|$;
\item $\partial\mathcal H' \supset \partial\mathcal H$.
\end{itemize}
\end{lemma}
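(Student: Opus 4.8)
The plan is to use the covering hypothesis to force a large ``surplus'' of edges of $\mathcal H$ localised around the $(r-2)$-sets met by $\mathcal F$, and then to excise most of that surplus while patching up the shadow. We may assume $\mathcal F\neq\emptyset$.

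Write $\partial\mathcal F$ for the $(r-2)$-uniform hypergraph of those $(r-2)$-sets contained in some member of $\mathcal F$, and for a set $S$ let $d_{\mathcal H}(S)$ be the number of edges of $\mathcal H$ containing $S$ and $N_{\mathcal H}(S)=\{v\notin S: S\cup\{v\}\in\mathcal H\}$. For $B\in\mathcal F$ and $x\in[n]\setminus B$, fix a vertex $u=u(B,x)$ covering the $r$-set $B\cup\{x\}$; then $B\cup\{u\}\in\mathcal H$ and $(B\setminus\{b\})\cup\{x,u\}\in\mathcal H$ for every $b\in B$. The first step is to prove that for every $T\in\partial\mathcal F$: (i) every $(r-1)$-set containing $T$ lies in $\partial\mathcal H$; and (ii) $d_{\mathcal H}(T)\ge n-r+1$. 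Fix $B\in\mathcal F$ with $T\subseteq B$, say $B=T\cup\{b_0\}$. For (i): the $(r-1)$-set $T\cup\{b_0\}=B$ lies in $\partial\mathcal H$ since $B\cup\{u(B,x)\}\in\mathcal H$, and for $w\in[n]\setminus B$ the edge $(B\setminus\{b_0\})\cup\{w,u(B,w)\}=T\cup\{w,u(B,w)\}$ shows $T\cup\{w\}\in\partial\mathcal H$. For (ii): the edges $T\cup\{x,u(B,x)\}$ show $[n]\setminus B\subseteq\bigcup_{u\in N_{\mathcal H}(B)}N_{\mathcal H}(T\cup\{u\})$, so $\sum_{u\in N_{\mathcal H}(B)}d_{\mathcal H}(T\cup\{u\})\ge n-r+1$; since $b_0\notin N_{\mathcal H}(B)$, $|[n]\setminus T|=n-r+2$ and $d_{\mathcal H}(T\cup\{b_0\})=|N_{\mathcal H}(B)|$, summing $d_{\mathcal H}(T\cup\{w\})$ over $w\in[n]\setminus T$, keeping the $|N_{\mathcal H}(B)|+1$ terms with $w\in N_{\mathcal H}(B)\cup\{b_0\}$ and bounding each of the other $n-r+1-|N_{\mathcal H}(B)|$ terms below by $1$ via (i), gives $2d_{\mathcal H}(T)=\sum_{w\in[n]\setminus T}d_{\mathcal H}(T\cup\{w\})\ge\sum_{u\in N_{\mathcal H}(B)}d_{\mathcal H}(T\cup\{u\})+(n-r+1)\ge 2(n-r+1)$, which is (ii).

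Next let $\mathcal H_*=\{e\in\mathcal H: T\subseteq e\text{ for some }T\in\partial\mathcal F\}$. Double counting with (ii) gives $|\mathcal H_*|\ge\binom r2^{-1}\sum_{T\in\partial\mathcal F}d_{\mathcal H}(T)\ge\frac{2}{r(r-1)}\,|\partial\mathcal F|\,(n-r+1)$, and one application of the Lov\'asz form of Kruskal--Katona to the $(r-1)$-uniform $\mathcal F$ gives $|\partial\mathcal F|\ge c_r\,|\mathcal F|^{(r-2)/(r-1)}$ with $c_r>0$; hence $|\mathcal H_*|\ge c_r'\,|\mathcal F|^{(r-2)/(r-1)}\,n$ for large $n$. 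Since $|\mathcal F|\le\epsilon n^{r-1}$ this yields $|\mathcal H_*|\ge c_r'\,(n^{r-1}/|\mathcal F|)^{1/(r-1)}\,|\mathcal F|\ge c_r'\,\epsilon^{-1/(r-1)}\,|\mathcal F|$, which can be made larger than $2\lambda|\mathcal F|$ by choosing $\epsilon$ small in terms of $\lambda$ and $r$. It therefore suffices to construct an $r$-uniform $\mathcal C$ with $|\mathcal C|\le\frac12|\mathcal H_*|$ and $\partial\mathcal C\supseteq\partial\mathcal H_*\setminus\partial(\mathcal H\setminus\mathcal H_*)$, for then $\mathcal H'=(\mathcal H\setminus\mathcal H_*)\cup\mathcal C$ satisfies $\partial\mathcal H'\supseteq\partial\mathcal H$ and $|\mathcal H'|\le|\mathcal H|-\tfrac12|\mathcal H_*|<|\mathcal H|-\lambda|\mathcal F|$.

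The construction of $\mathcal C$ is the crux, and the main obstacle. The $(r-1)$-sets that deleting $\mathcal H_*$ could remove from the shadow are of two types: (a) those containing a member of $\partial\mathcal F$ --- for each $T\in\partial\mathcal F$ a sunflower of at most $n-r+2$ sets $T\cup\{w\}$ (all required, by (i)), re-coverable by pairing the extensions into $\le\lceil(n-r+2)/2\rceil$ edges $T\cup\{a,a'\}$; and (b) ``straddling'' sets $(T\setminus\{t\})\cup\{y,z\}$ arising from a deleted edge $T\cup\{y,z\}$. The danger is that a crude re-covering of the straddling sets, or even of the sunflowers when the members of $\partial\mathcal F$ are clustered in few $r$-sets, costs a constant multiple of $|\mathcal H_*|$ and swallows the surplus; bounding $|\mathcal C|$ by $\tfrac12|\mathcal H_*|$ is the real work. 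I would attempt this by splitting into two regimes: when $|\partial\mathcal F|$ is a sufficiently large fraction of $\binom n{r-2}$ one has $|\mathcal H_*|\ge 2D(n,r)+2\lambda|\mathcal F|$ with $D(n,r)\sim\binom n{r-1}/r$, and $\mathcal C$ may simply be taken to be a complete covering design of $[n]$ (its shadow is everything, so type (b) disappears); while when $|\partial\mathcal F|$ is small only $O(|\mathcal F|^{(r-2)/(r-1)})$ link structures are involved, and one should sharpen the lower bound on $|\mathcal H_*|$ in this ``clustered'' regime --- the bound above via $\binom r2$ is wasteful there, since the covering hypothesis applied inside each link graph (on $[n]\setminus T$, with edge $yz\Leftrightarrow T\cup\{y,z\}\in\mathcal H$) forces, via $b_0$ dominating that graph at distance two, either many straddling edges or an $(r-1)$-set of large degree --- and then run a sunflower-by-sunflower re-covering with the straddling sets handled against this extra surplus. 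Steps (i)--(ii) and the Kruskal--Katona estimate are routine; controlling the straddling sets is where the difficulty lies.
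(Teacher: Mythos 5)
Your preparatory steps are sound: (i), (ii) and the double count giving $|\mathcal H_*|\ge c_r'\,\epsilon^{-1/(r-1)}|\mathcal F|$ are all correct, and they do establish a quantitative surplus of edges of $\mathcal H$ near $\partial\mathcal F$ (this plays roughly the role of the paper's count of good $(r+1)$-tuples). But the proof stops exactly where the real work begins, and you say so yourself. You reduce the lemma to producing an $r$-uniform $\mathcal C$ with $|\mathcal C|\le\frac12|\mathcal H_*|$ and $\partial\mathcal C\supseteq\partial\mathcal H_*\setminus\partial(\mathcal H\setminus\mathcal H_*)$, and then only describe, in conditional language, how one might build $\mathcal C$. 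Worse, the reduction itself looks unachievable as stated: deleting \emph{all} of $\mathcal H_*$ destroys, for each deleted edge $e\supseteq T$ with $T\in\partial\mathcal F$, up to $r-2$ ``straddling'' $(r-1)$-subsets $e\setminus\{t\}$ with $t\in T$; if these are largely distinct and not in $\partial(\mathcal H\setminus\mathcal H_*)$, any re-covering $\mathcal C$ needs on the order of $\frac{r-2}{r}|\mathcal H_*|$ edges, which exceeds $\frac12|\mathcal H_*|$ once $r\ge 5$. Your two-regime plan (a complete covering design when $\partial\mathcal F$ is dense; a sharpened surplus bound in the clustered regime) is a plausible direction, but none of it is carried out, and the clustered regime is precisely where the difficulty is concentrated. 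So there is a genuine gap at the central step.

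For contrast, the paper never deletes an edge whose shadow is not already guaranteed to survive, so the straddling problem never arises. It randomly partitions $[n]$ into $r+1$ classes, counts good tuples $(x_1,\dots,x_{r-1},w,u)$ with $\{x_1,\dots,x_{r-1}\}\in\mathcal F$ and $u$ covering $\{x_1,\dots,x_{r-1},w\}$, and splits into two cases. If many $F\in\mathcal F$ admit many distinct covering vertices $u_1,\dots,u_{\alpha(F)}$, it deletes the edges $F\cup\{u_i\}$ for $i<\alpha(F)$ only: each $(r-1)$-subset of a deleted edge is covered either by the retained edge $F\cup\{u_{\alpha(F)}\}$ or by a retained covering edge $(F\setminus\{a_j\})\cup\{w_i,u_i\}$. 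Otherwise a single $u_F$ covers $F\cup\{w\}$ for linearly many $w$, and the edges $(F\setminus\{a_{r-1}\})\cup\{w,u_F\}$ are traded for half as many edges pairing up the $w$'s; the Loomis--Whitney inequality supplies enough such $F$ with pairwise disjoint deleted edge sets. To salvage your approach you would need to replace ``delete all of $\mathcal H_*$'' with a selective deletion of this kind.
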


\begin{proof}
The proof hinges on the fact that the presence of $\mathcal F$ forces $\mathcal H$ to cover many $r$-sets, and this in turn
means that that $\mathcal H$ is not efficiently structured to have a large shadow (as some $(r-1)$ sets
are contained in multiple elements of $\mathcal H$).
We will obtain $\mathcal H'$ from $\mathcal H$ by deleting some edges, and possibly adding a smaller number of new edges to make sure that the shadow does not shrink.

We choose a large positive integer $M=M(r,\lambda)>0$ and then a small constant $\epsilon=\epsilon(M,r,\lambda)>0$.
Let $m=|\mathcal F|$. 
Let us say that an ordered $(r+1)$-tuple $(x_1,\dots,x_{r-1},w,u)$ is {\em good} if $\{x_1,\dots,x_{r-1}\}\in\mathcal F$ and $u$ covers $\{x_1,\dots,x_{r-1},w\}$ in $\mathcal H$. Since
there are $(r-1)!$ ways of ordering an element of $\mathcal F$, and $n-r+1$ choices for an additional vertex $w$ (with at least one choice of $u$ for each of these), there are
$(r-1)!(n-r+1)m$ sequences $(x_1,\dots,x_{r-1},w)$ that extend to a good $(r+1)$-tuple of form $(x_1,\dots,x_{r-1},w,u)$.

Fix a partition $X_1\cup \dots \cup X_{r+1}$ of $[n]$, and 
let $\mathcal B_0$ be the
set of good $(r+1)$-tuples $(a_1,\dots,a_{r+1})$ such that $a_i\in X_i$ for each $i$.
Let $\mathcal A_0$ be the set of $r$-tuples $(a_1,\dots,a_r)$ such that
$(a_1,\dots,a_{r+1})\in\mathcal B_0$ for some $a_{r+1}\in X_{r+1}$; and let $\mathcal F_0$
be the the set of $(r-1)$-tuples $(a_1,\dots,a_{r-1})$ such that $(a_1,\dots,a_r)\in \mathcal A_0$ for some $a_r\in X_r$. We will also describe this as follows: we consider $\mathcal B_0$
as a subset of the cartesian product $X_1\times\dots \times X_{r+1}$, and then $\mathcal F_0$ and $\mathcal A_0$ are the projections of $\mathcal B_0$ on to the first $r-1$ coordinates and
the first $r$ coordinates respectively.

We choose $X_1,\dots,X_{r+1}$ so that $|\mathcal A_0|$ is as large as possible: by 
considering a random partition, we see that
\begin{equation}\label{bound0}
|\mathcal A_0|\ge (r-1)!(n-r+1)m/(r+1)^{r+1}.
\end{equation}

For each $F=(a_1,\dots,a_{r-1})\in\mathcal F_0$, let $\alpha(F)$ be the number of distinct elements 
$u\in X_{r+1}$ such that $(a_1,\dots,a_{r-1},w,u)\in\mathcal B_0$ for some $w\in X_r$.
Note that $\alpha(F)\ge1$ for all $F\in\mathcal F_0$.

Suppose first that  
\begin{equation}\label{test1}
\sum_{F\in\mathcal F_0}\max\{\alpha(F)-1,0\}>\lambda m.
\end{equation}
In this case we construct our set system $\mathcal H'$ as follows.  For each 
$F=(a_1,\dots,a_{r-1})\in\mathcal F_0$ with $\alpha(F)>1$, 
we choose $\alpha(F)$ elements of $\mathcal B_0$ that 
have $F$ as an initial segment and contain distinct elements of $X_{r+1}$,
say
$$(a_1,\dots,a_{r-1},w_1,u_1),\dots,(a_1,\dots,a_{r-1},w_{\alpha(F)},u_{\alpha(F)})$$
(so the $u_i$ are distinct, but the $w_i$ may contain repetitions).
We now delete from $\mathcal H$ the edges
$$\{a_1,\dots,a_{r-1},u_1\},\dots,\{a_1,\dots,a_{r-1},u_{\alpha(F)-1}\};$$
these belong to $\mathcal H$ as $(a_1,\dots,a_r,w_i,u_i)$ is a good $(r+1)$-tuple for each $i$ and so $u_i$ covers $\{a_1,\dots,a_r,w_i\}$.
Repeating for each $F\in\mathcal F_0$ with $\alpha(F)>1$, it follows from \eqref{test1} that we delete a total of more than
$\lambda m$ edges from $\mathcal H$.  Let $\mathcal H'$ be the resulting hypergraph.  
Clearly $|\mathcal H'|+\lambda|\mathcal F|<|\mathcal H|$, so it is enough to show that $\partial\mathcal H'=\partial\mathcal H$.  Suppose that 
$I\in\partial\mathcal H$ (so $|I|=r-1$).  If $I\not\in\partial H'$ then there must be an edge of form $A=\{a_1,\dots,a_{r-1},u_i\}$ that contains $I$ and that we deleted from $\mathcal H$.  If $I=A\setminus a_j$ for some $j$, 
then $I\subset (A\setminus a_j)\cup\{w_i,u_i\}$, which belongs to $\mathcal H$ as $u_i$ covers
$\{a_1,\dots,a_r,w_i\}$
and it was not one of the edges we deleted; 
otherwise $I=\{a_1,\dots,a_{r-1}\}$, and is contained in $\{a_1,\dots,a_{r-1},u_{\alpha(F)}\}$, which we did not delete from $\mathcal H$.  We conclude that $\partial\mathcal H'=\partial\mathcal H$.

We may therefore assume that \eqref{test1} does not hold.  It follows that there are at most $\lambda m/M$ elements $F=(a_1,\dots,a_{r-1})\in\mathcal F_0$ with $\alpha(F)\ge M+1$. 
Let $\mathcal F_1=\{F\in\mathcal F_0: \alpha(F)\le M\}$, and let
$\mathcal A_1$ be the set of elements 
$(a_1,\dots,a_{r-1},w)\in\mathcal A_0$ 
such that $(a_1,\dots,a_{r-1})\in\mathcal F_1$.
Since each $F\in\mathcal F_0$ extends to at most $n-r+1$ elements of $\mathcal A_0$, it
follows from \eqref{bound0} that  for all sufficiently large $n$
$$|\mathcal A_1| \ge (r-1)!(n-r+1)m/(r+1)^{r+1} - (n-r+1)(\lambda m/M)\ge mn/M,$$
provided $M$ is large enough.   

For each $F=(a_1,\dots,a_{r-1})\in\mathcal F_1$, define
$$\beta(F)=\max_{u\in X_{r+1}}|\{w\in X_r: (a_1,\dots,a_{r-1},w,u)\in\mathcal B_0\}|,$$
and choose $u_F\in X_{r+1}$ such that there are $\beta(F)$ elements $w\in X_r$ such that
$(a_1,\dots,a_{r-1},w,u_F)\in\mathcal B_0$.   Each $F\in\mathcal F_1$ extends to at most $\alpha(F)\beta(F)$ elements of
$\mathcal B_0$, and so $F$ extends to at most $\alpha(F)\beta(F)$ elements of $\mathcal A_1$. Therefore
$$|\mathcal A_1|\le \sum_{F\in\mathcal F_1}\alpha(F)\beta(F)\le M\sum_{F\in\mathcal F_1}\beta(F),$$
and so $\sum_{F\in\mathcal F_1}\beta(F)\ge mn/M^2$.  

Let $\mathcal F_2=\{F\in\mathcal F_1:\beta(F)\ge n/2M^2\}$.  
So for each $F=(a_1,\dots,a_{r-1})\in\mathcal F_2$ we have chosen
$u_F\in X_{r+1}$ such that there are at least $n/2M^2$ elements of $\mathcal B_0$ of form $(a_1,\dots,a_{r-1},w,u_F)$ with $w\in X_r$.
By counting the number of ways of extending elements from $\mathcal F_1$ to $\mathcal A_1$, we see that
$$|\mathcal A_1|\le (n-r+1)|\mathcal F_2| + (n/2M^2)|\mathcal F_1\setminus\mathcal F_2|\le n|\mathcal F_2|+mn/2M^2.$$
Since $|\mathcal A_1|\ge mn/M$, this implies that $|\mathcal F_2|\ge m/2M^2$.

Now $\mathcal F_2$ is a subset of $X_1\times\dots\times X_{r-1}$.  For $i=1,\dots,r-1$, let $\mathcal P_i$ be the projection of $\mathcal F_2$ onto the coordinates other than $i$, 
so $\mathcal P_i\subset \prod_{j\in [r-1]\setminus i} X_j$.  It follows from the Loomis-Whitney Inequality \cite{LW49} (see also \cite{BT95}) that
$$\prod_{i=1}^{r-1}|\mathcal P_i| \ge |\mathcal F_2|^{r-2},$$
and so in particular there is some $i$ such that
$$|\mathcal P_i|\ge |\mathcal F_2|^{(r-2)/(r-1)} \ge (m/2M^2)^{(r-2)/(r-1)} >5 M^2 \lambda m/n,$$
as $m\le\epsilon n^{r-1}$ by assumption and we may also assume $\epsilon<1/ (10\lambda M^4)^r$.

Without loss of generality, we may assume that $i=r-1$.  Let $\mathcal P=\mathcal P_{r-1}$.  
For each element of $P=(a_1,\dots,a_{r-2})\in\mathcal P$,
choose an element $F\in\mathcal F_2$ that projects to $P$. Let $\mathcal F_3$ be the collection of $|\mathcal P|$ elements of $\mathcal F_2$ that are chosen.
Thus the elements of $\mathcal F_3$ have pairwise distinct projections on to the first $r-2$ coordinates.

We are finally ready to construct our modification of $\mathcal H$.  For $F=(a_1,\dots,a_{r-1})\in \mathcal F_3$,
let $W_F=\{w\in X_r:(a_1,\dots,a_{r-1},w,u_F)\in \mathcal B_0\}$, say $W_F=\{w_1,\dots,w_t\}$.  Since $F\in\mathcal F_3\subset\mathcal F_2$, we have $t\ge n/2M^2$;
if $t$ is odd, we reduce it by one (and do not use the final element of $W_F$).
Now delete from $\mathcal H$ the edges
\begin{equation}\label{deleting}
\{a_1,\dots,a_{r-2},w_1,u_F\},\dots,\{a_1,\dots,a_{r-2},w_t,u_F\}
\end{equation}
and add (if not already present) new edges
$$
\{a_1,\dots,a_{r-2},w_1,w_2\}, \{a_1,\dots,a_{r-2},w_3,w_4\}, \dots, \{a_1,\dots,a_{r-2},w_{t-1},w_t\}.
$$
Thus we decrease the number of edges by at least $t/2\ge n/5M^2$.  Repeating for every $F\in \mathcal F_3$, we decrease the number of edges by
at least  $|\mathcal F_3|(n/5M^2)> (5M^2\lambda m/n)(n/5M^2)=\lambda m$
(note that we delete disjoint sets of edges for distinct $F,F'\in\mathcal F_3$, as $F$ and $F'$ extend distinct elements of $\mathcal P$ and so have distinct projections on to the first $r-2$ coordinates).
Let $\mathcal H'$ be the resulting hypergraph.

All that remains is to check that $\partial H\subset \partial H'$.  Given $I\in\partial H\setminus \partial H'$, we know that $I$ must be contained in one of the edges we deleted.
So suppose without loss of generality that $I\subset \{a_1,\dots,a_{r-2},w_1,u_F\}$ in \eqref{deleting}.  Since 
$u_F$ covers $\{a_1,\dots,a_{r-1},w_1\}$ in $\mathcal H$  we cannot have $u_F\in I$, as then $I\cup \{a_{r-1}\}$ is an edge of both $\mathcal H$ and $\mathcal H'$.  But then $I\subset  \{a_1,\dots,a_{r-2},w_1,w_2\}\in\mathcal H'$.
This gives a contradiction and proves the lemma.
\end{proof}

We can now complete the proof of our main theorem.

\begin{proof}[Proof of Theorem \ref{maintheorem}]
We prove the theorem by induction on $r$.  The case $r=2$ is immediate from the Erd\H os-Moser theorem, so we may assume that $r\ge3$ and we have proved smaller cases.

We first note that, since Theorem \ref{maintheorem} holds for $r-1$, it follows that Lemma \ref{lemmaa} holds for $r$.  
Let $\mathcal H$ be a hypergraph with $n$ vertices and $f(n,k,r)$ edges in which every $k$-set is covered.
Let $v_1,\dots,v_n$ be the vertices of $\mathcal H$ in decreasing order of degree, 
let $V=\{v_1,\dots,v_n\}$, and let $A=\{v_1,\dots,v_{k-r+1}\}$.
Let $\mathcal H_1$ be  the restriction of $\mathcal H$ to $V\setminus A$.

Let $\mathcal G\subset V^{(r)}$ be the collection of all $r$-sets of vertices that meet $A$ but are not present in $\mathcal H$, and suppose $\mathcal G$ is nonempty.
It follows from Lemma \ref{lemmaa} that, for any $\epsilon>0$, if $n$ is sufficiently large then 
$|\mathcal G|<\epsilon n^{r-1}$.  

We split $\mathcal G$ into sets $\mathcal G_0,\dots,\mathcal G_{r-1}$, where 
$\mathcal G_i=\{G\in\mathcal G:|G\setminus A|=i\}$.
Let $\mathcal G_i$ be the largest of these sets, and let $\mathcal G^*_i=\{G\setminus A:G\in G_i\}$.
Let $\phi:(V\setminus A)^{(i)}\to (V\setminus A)^{(r-1)}$ be an injection such that $\phi(S)\supset S$ for every $S$ (the existence of such a mapping follows easily from Hall's Theorem) and let $\mathcal F=\{\phi(S):S\in\mathcal G_i^*\}$. 
Note that $|\mathcal G_i|\ge|\mathcal G|/r$ and for each $B\in\mathcal G_i^*$ there are at most $k^r$ sets $G\in\mathcal G_i$ with $G\setminus A=B$.
Thus $\mathcal F$ is a collection of 
$(r-1)$-sets in $V\setminus A$, such that $|\mathcal F|\ge|\mathcal G|/k^rr$ and every set in $\mathcal F$ 
contains $G\setminus A$ for some $G\in \mathcal G$.

Now for any $F\in \mathcal F$, we can pick $G\in\mathcal G$ such that $G\setminus A\subset F$ and choose $v\in A\cap G$.
For any $w\in V\setminus (F\cup A)$, the set $(A\setminus v) \cup F\cup \{w\}$ has 
size $k$ and so must be covered in $\mathcal H$ by some $u$.  We do not have $u=v$, as $G$ is not an edge of $\mathcal H$.  
Thus $u\not\in A$: it follows that 
$F\cup \{w\}$ is covered by $u$ in $\mathcal H_1$.

We can now apply Lemma \ref{lemmab} to the hypergraphs $\mathcal F$ and $\mathcal H_1$ (both on vertex set $V\setminus A$) with $\lambda=k^rr$ to deduce that there is some $\mathcal H_1'$ such that $|\mathcal H_1'|+k^rr|\mathcal F|<|\mathcal H_1|$ and
$\partial H_1'\supset\partial H_1$.  Let $\mathcal H'$ be the hypergraph with vertex set $V$ obtained from $\mathcal H_1'$
by adding all $r$-sets incident with $A$.  Then  $|\mathcal H'|<|\mathcal H|$, as $k^rr|\mathcal F|\ge|\mathcal G|$.  

Finally, we claim that every $k$-set in $V$ is covered in $\mathcal H'$.  Let $B\in V^{(k)}$.  
If $A\setminus B$ is nonempty, then any vertex of $A\setminus B$ covers $B$ in $\mathcal H'$.  
Otherwise, $A \subset B$: let $C=B\setminus A$, so $C$ is an $(r-1)$-set. 
There must be some $u\not\in B$ that covers $B$ in $\mathcal H$, and so $\{u\}\cup C\in\mathcal H$.  
Then we also have $\{u\}\cup C\in\mathcal H_1$, and so $C\in\partial \mathcal H_1$.  By construction, we also have $C\in\partial\mathcal H_1'$, so there is some 
$r$-set $D\in \mathcal H_1'$ that contains $C$.  Let $u'$ be the single element of $D\setminus C$: then $u'$ covers $B$ in $\mathcal H'$, as $D\in\mathcal H'$, and every other $r$-set contained in $\{u'\}\cup B$ meets $A$ and so belongs to $\mathcal H'$.
This gives a contradiction, as $|\mathcal H'|<|\mathcal H|$, and every $k$-set is covered in $\mathcal H'$.

We conclude that every edge incident with $A$ must be present in $\mathcal H$.  But now consider any $(r-1)$-set $B\subset V\setminus A$: arguing as above, the set $A\cup B$ must be covered by some $u$ in $\mathcal H$, and so $B\cup \{u\}\in\mathcal H_1$.  It follows that $\partial\mathcal H_1=(V\setminus A)^{(r-1)}$, and so 
$|\mathcal H_1|\ge D(n-|A|,r) = D(n-k+r-1,r)$.  Thus $f(n,k,r)=|\mathcal H|\ge g(n,k,r)$.
Furthermore, since $\mathcal H$ contains all $r$-sets meeting $A$, and a collection of
$D(n-k+r-1,r)$ $r$-sets in $V\setminus A$ containing all $(r-1)$-sets in $V\setminus A$ in their shadow,
we see that $\mathcal H\in\mathcal G(n,k,r)$.
\end{proof}

\section{Further directions}

There are a number of interesting related problems.  In their original paper \cite{EM64}, Erd\H os and Moser raised the question of covering oriented graphs.  A digraph with at least $k+1$ vertices has {\em property $S_k$} (named after Sch\"utte) if for every $k$-set $B$ of vertices there is a vertex $u$ such that $B$ is contained in the outneighbourhood of $u$.  Erd\H os and Moser \cite{EM64} asked the following.

\begin{problem}\label{orientedproblem}
What is the minimum number $F(n,k)$ of edges in a graph $G$ of order $n$ that has an orientation with property $S_k$?
\end{problem}

Erd\H os and Moser \cite{EM70} noted that, for fixed $k$ and large enough $n$, $f(k-1)n\le F(n,k) \le f(k)n$, where $f(k)$ is the minimum number of vertices in an oriented graph with property $S_k$; it is known that 
$f(k)=2^{(1+o(1))k}$.  

For $k\ge 2$, let $\delta(k)$ be the smallest positive integer $\delta$ such that some oriented graph with property $S_k$ has a vertex with indegree $\delta$.  We shall show that the asymptotic behaviour of $F(n,k)$ depends primarily on $\delta(k)$.

\begin{lemma}
For every $k\ge 2$ there is are integers $c=c(k)$ and $n_0=n_0(k)$ such that $F(n,k)=\delta(k)n+c$ for all $n\ge n_0$.
\end{lemma}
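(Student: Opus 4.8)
The plan is to prove matching bounds $\delta(k)n+c(k)\le F(n,k)\le \delta(k)n+c(k)$, where $c(k)$ is the value of a finite optimisation problem over ``templates''; the linear term is easy and all the content lies in identifying, and then matching, the additive constant. For the linear lower bound, observe that every oriented graph $D$ with property $S_k$ has minimum indegree at least $\delta(k)$: a vertex of indegree $0$ cannot belong to any covered $k$-set (since $|V(D)|\ge k+1$), so every indegree is positive, and a vertex of indegree strictly between $1$ and $\delta(k)$ would contradict the minimality in the definition of $\delta(k)$. Summing indegrees over $V(D)$ gives $F(n,k)\ge \delta(k)n$, which pins down the slope; the remaining task is the constant.

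\emph{The construction (upper bound).} I would fix a small oriented graph $D_0$ with vertex set $V_0$ that has property $S_k$ and contains a set $Z$ of $\delta(k)$ near-universal dominators, and, given $n$, build $D_n$ by adding $n-|V_0|$ new vertices, each of indegree exactly $\delta(k)$, whose in-neighbourhoods are prescribed subsets of $V_0$ chosen by a fixed rule (a design-theoretic choice when $\delta(k)$ is small) so that every $k$-set meeting the new vertices is dominated from inside $V_0$. Property $S_k$ is then checked directly: $k$-sets inside $V_0$ are covered because $D_0$ has property $S_k$, while those meeting the new part are covered by the prescribed in-neighbours. Each new vertex contributes exactly $\delta(k)$ edges, so $|E(D_n)|=\delta(k)n+\bigl(|E(D_0)|-\delta(k)|V_0|\bigr)$; defining $c(k)$ to be the minimum of $|E(D_0)|-\delta(k)|V_0|$ over admissible pairs $(D_0,Z)$ yields $F(n,k)\le \delta(k)n+c(k)$.

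\emph{The matching lower bound (the crux).} Let $D$ be extremal on $[n]$. By the construction, $|E(D)|\le(\delta(k)+1)n$ for large $n$, so for a suitable fixed $\epsilon$ at most $K=K(k)$ vertices have outdegree $\ge\epsilon n$; call these heavy vertices $H$ and put $L=V\setminus H$, so $|H|\le K$. Choosing $\epsilon$ small enough that the light vertices alone cannot dominate all $\binom{|L|}{k}$ light $k$-sets forces the heavy vertices to do genuine covering, and—more to the point—every $k$-set containing a heavy vertex needs a dominator pointing to each of its heavy vertices, which forces in-edges at $H$. Writing $|E(D)|=e(H,H)+e(L\to H)+\bigl(e(H\to L)+e(L,L)\bigr)$ and using $e(H\to L)+e(L,L)=\sum_{v\in L}d^-_D(v)\ge \delta(k)|L|=\delta(k)n-\delta(k)|H|$, it suffices to prove $e(H,H)+e(L\to H)-\delta(k)|H|\ge c(k)$, i.e.\ that $H$ together with its incoming structure is essentially an admissible template. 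I would obtain this by deleting from $D$ the (boundedly many) heavy and ``exceptional'' light vertices and showing that the surviving light vertices have indegree exactly $\delta(k)$ with a common in-neighbourhood pattern inside $H$, so that $D[H]$ equipped with this pattern must satisfy the same covering condition demanded of a template—otherwise a $k$-set of generic light vertices, or of heavy vertices together with generic light vertices, would be uncovered—whence its cost is at least $c(k)$. A local-exchange argument in the spirit of Lemma~\ref{lemmab}, deleting redundant edges at heavy vertices and re-routing to protect the domination of the endangered $k$-sets, can be used to bring an arbitrary near-extremal $D$ into this normal form.

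The main obstacle is exactly this normal-form reduction: controlling the exceptional light vertices and, crucially, verifying that the covering condition extracted from the lower bound coincides with the one built into the construction, so that the two occurrences of $c(k)$ genuinely agree and one obtains an exact formula rather than merely $F(n,k)=\delta(k)n+O(1)$.
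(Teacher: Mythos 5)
Your slope bound $F(n,k)\ge\delta(k)n$ is fine, but the matching of the additive constants is not proved: you reduce it to a ``normal-form'' structural claim about extremal digraphs (that after discarding boundedly many exceptional vertices the heavy part, together with its incoming pattern, is an admissible template of cost at least $c(k)$), and you yourself flag this reduction as the unresolved crux. As it stands you have only $F(n,k)=\delta(k)n+O(1)$, which is essentially what Erd\H os and Moser already observed. There are also secondary issues: your ``finite optimisation over templates'' ranges over templates of unbounded order, so it is not clear that the minimum defining $c(k)$ is attained, nor that the two occurrences of $c(k)$ (the one in the construction and the one extracted from the structural claim) coincide; and the upper-bound construction itself (new vertices with in-neighbourhoods ``chosen by a fixed rule'') is asserted rather than verified to preserve property $S_k$, in particular for $k$-sets that mix old and new vertices.

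The missing idea is that no explicit identification of the constant, and no structural analysis of extremal graphs, is needed. Two observations suffice. First, if $G$ has property $S_k$ and $v$ is a vertex of indegree $\delta(k)$, then adding a new vertex $v'$ with no out-edges and the same in-neighbourhood as $v$ preserves property $S_k$ and costs exactly $\delta(k)$ edges; hence from any such $G$ one obtains, for every $n\ge|V(G)|$, a graph with property $S_k$ and $\delta(k)n+\bigl(|E(G)|-\delta(k)|V(G)|\bigr)$ edges. Second, any graph with property $S_k$ and at most $\delta(k)n+c_1$ edges has average indegree below $\delta(k)+1$ once $n>c_1$, hence (as the minimum indegree is at least $\delta(k)$) a vertex of indegree exactly $\delta(k)$, so the duplication step applies to every near-extremal graph. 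Now take $c$ minimal such that infinitely many graphs with property $S_k$ have exactly $\delta(k)|V|+c$ edges: each value $c'<c$ is achieved by only finitely many graphs, so $F(n,k)\ge\delta(k)n+c$ for all large $n$, while duplication starting from one sufficiently large witness gives $F(n,k)\le\delta(k)n+c$ for all large $n$. This is the whole proof; the heavy/light decomposition and the exchange argument you sketch are not needed.
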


\begin{proof}
Note first that if $G$ is an oriented graph with $t$ vertices that has property $S_k$ and $v$ is any vertex of $G$ then we can generate another oriented graph
with property $S_k$ by adding a new vertex $v'$ with no outedges, and the same inneighbourhood as $v$.  If $G$ has a vertex with indegree $\delta(k)$ then applying
this construction repeatedly to a vertex of minimal indegree gives, for all $n\ge t$ 
and some $c_1=c_1(G)$,
oriented graphs with $n$ vertices, $\delta(k)n+c_1$ edges and property $S_k$.    

Any oriented graph $G$ with property $S_k$ must have at least $\delta(k)|V(G)|$ edges,
and so we must have $\delta(k)n\le F(k,n)\le\delta(k)n+c_1$ for all sufficiently large $n$.  Let $c\ge0$ be minimal such that there are infinitely many graphs $G$ with
$\delta(k)|V(G)|+c$ edges and property $S_k$: if $|V(G)|$ is large enough 
and $G$ has $\delta(k)|V(G)|+c$ edges
then it has minimal degree at most $\delta(k)$, so we can apply the construction to generate a sequence of oriented graphs
with $\delta(k)n+c$ edges and property $S_k$ for all sufficiently large $n$.
If $n$ is large enough, these are extremal.
\end{proof}

The covering problem is also natural for the more general class of digraphs (so we allow edges in both directions).  In this case, we can solve the problem completely.

Let $\mathcal A(n,k)$ be the family of digraphs on vertex set $[n]$ that can be constructed as follows:
\begin{itemize} 
\item Add all $k(k+1)$ directed edges that join any two elements from $[k+1]$
\item For each $i>k+1$, add edges from exactly $k$ elements of $[k+1]$ to $i$.
\end{itemize}
If $G \in\mathcal A(n,k)$ then every vertex of $G$ has indegree exactly $k$.  Let us check that every $k$-set $B$ is covered in $G$. If an element $i\in [k+1]$
does not cover $B$ then either $i\in B$, or $i\not \in B$ and some vertex of $B$ is not an outneighbour of  $i$.  There are $|B\cap[k+1]|$ elements of the first
type and at most $|B\setminus [k+1]|$ elements of the second type (as each element of $B\setminus[k+1]$ has edges from all but one element of $[k+1]$); so there is at least one element of $[k+1]$ left over to cover $B$.

\begin{lemma}
Let $k\ge2$, and let $D$ be a digraph of order $n$ that has property $S_k$.  Then $D$ has at least $kn$ edges, and if $D$ has exactly $kn$ edges then $D$ is isomorphic to some element of $\mathcal A(n,k)$.
\end{lemma}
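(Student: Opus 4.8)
The plan is to show first that every vertex of $D$ has indegree at least $k$, which immediately gives $|D| \ge kn$; then to analyse the equality case by showing that all indegrees must be exactly $k$ and the structure is forced. For the lower bound on indegrees: suppose some vertex $v$ has indegree at most $k-1$, let $N^-(v)$ be its inneighbourhood, and consider a $k$-set $B$ containing $v$ together with all of $N^-(v)$ (padding with arbitrary further vertices to reach size $k$; this is possible since $n \ge k+1$). Any vertex $u$ covering $B$ must satisfy $v \in N^+(u)$, i.e. $u \in N^-(v)$; but $u \notin B \supseteq N^-(v) \cup \{v\}$, a contradiction. Hence $\delta^-(D) \ge k$ and $|D| = \sum_v d^-(v) \ge kn$.

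For the equality case, assume $|D| = kn$, so every vertex has indegree exactly $k$. The argument above now shows something sharper: for every vertex $v$, the only $k$-set consisting of $v$ and $k-1$ other vertices that $v$'s non-coverage would obstruct is $B = N^-(v) \cup \{v\}$ itself; more usefully, any $k$-set $B$ must be covered, and a covering vertex $u$ has $B \subseteq N^+(u)$, so $B$ lies in the outneighbourhood of some vertex. The key structural step is to locate the clique on $[k+1]$. I would argue as follows: pick any vertex $v_1$; it has indegree $k$, say $N^-(v_1) = \{v_2,\dots,v_{k+1}\}$. The $k$-set $\{v_2,\dots,v_{k+1}\}$ must be covered, necessarily by a vertex outside it, and I claim the only candidate is $v_1$ — indeed if $w \ne v_1$ covered it then $w$ would be an inneighbour of each of $v_2,\dots,v_{k+1}$ and $w \notin \{v_2,\dots,v_{k+1}\}$; one then has to rule this out, which is where the counting must be done carefully. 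The cleanest route is probably to consider the set $S$ of all vertices of indegree exactly $k$ (all of them) and to show that the ``high outdegree'' vertices — those covering many $k$-sets — must form a set of size exactly $k+1$ inducing a complete digraph, by a double-counting argument: each of the $\binom{n}{k}$ $k$-sets needs a cover, each vertex $u$ covers exactly $\binom{d^+(u)}{k}$ of them (at most), so $\sum_u \binom{d^+(u)}{k} \ge \binom{n}{k}$; combined with $\sum_u d^+(u) = kn$ and convexity, this forces the outdegree sequence to be extremely concentrated, namely $k+1$ vertices of outdegree $\approx n$ and the rest with outdegree too small to cover any $k$-set.

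Once it is established that there is a set $T$ of $k+1$ vertices such that every $k$-set is covered by a vertex of $T$, the rest is bookkeeping. Since each vertex of $T$ has indegree exactly $k$ and there are $k(k+1)$ ordered pairs within $T$, and each vertex $i \in T$ fails to cover only those $k$-sets $B$ with $i \in B$ or $B \not\subseteq N^+(i)$: taking $B \subseteq T \setminus \{i\}$ shows $T \setminus \{i\} \subseteq N^+(i)$, i.e. $i$ sends an edge to every other vertex of $T$, so $T$ induces the complete digraph and this already accounts for $k(k+1)$ edges. For $j \notin T$, the remaining $kn - k(k+1) = k(n-k-1)$ edges must be distributed so that $j$ has indegree $k$; since $T$ induces all its internal edges, vertices outside $T$ have no inedges from outside $T$ available within the budget (each such would cost an edge not going into the $T$-structure while some vertex of $T$ would then have indegree $< k$ — here one checks the edge count closes exactly), so all $k$ inedges of $j$ come from $T$, i.e. $j$ receives edges from exactly $k$ of the $k+1$ elements of $T$. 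That is precisely the description of $\mathcal A(n,k)$, and the covering check for $\mathcal A(n,k)$ was already verified in the text, so $D \cong$ some element of $\mathcal A(n,k)$.

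The main obstacle I anticipate is the middle step: rigorously forcing the outdegree sequence to have exactly $k+1$ large entries and all others negligible. The convexity inequality $\sum_u \binom{d^+(u)}{k} \ge \binom{n}{k}$ with $\sum d^+(u) = kn$ does pin down the asymptotics, but turning ``asymptotically $k+1$ big vertices'' into ``exactly $k+1$, and they form a clique, and everything else hangs off them exactly'' requires care — one must handle the possibility of, say, $k$ vertices of outdegree near $n$ plus a spread-out tail, or $k+2$ medium vertices, and show each violates either the edge count or the covering requirement. I would manage this by working with the hypothesis $|D| = kn$ throughout (so indegrees are pinned at $k$, giving rigid local structure) rather than relying purely on the global outdegree inequality, using the indegree rigidity to bootstrap: once $T\setminus\{i\}\subseteq N^+(i)$ is known for the putative clique, the edge budget is exhausted by $k(k+1) + k(n-k-1) = kn$, leaving no slack, which forces all the remaining structural claims.
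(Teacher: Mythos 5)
Your lower bound argument is exactly the paper's: a vertex of indegree less than $k$ together with its inneighbourhood sits inside an uncoverable $k$-set, so every indegree is at least $k$ and $|D|\ge kn$. The equality case, however, has a genuine gap at precisely the point you flag as ``the main obstacle'', and the route you sketch for closing it does not work. The convexity bound $\sum_u\binom{d^+(u)}{k}\ge\binom{n}{k}$ together with $\sum_u d^+(u)=kn$ does \emph{not} force $k+1$ large outdegrees: it is already satisfied by $k$ vertices of outdegree $n-1$ (since $k\binom{n-1}{k}\ge\binom{n}{k}$ once $n(k-1)\ge k^2$), so counting alone cannot produce a set $T$ of size $k+1$ covering all $k$-sets, let alone show it induces a complete digraph. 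Everything downstream in your write-up (the deduction $T\setminus\{i\}\subseteq N^+(i)$, the treatment of vertices outside $T$) is conditional on this unestablished existence statement. Your closing bookkeeping is also not right: the identity $k(k+1)+k(n-k-1)=kn$ closes regardless of where the inedges of a vertex $j\notin T$ originate, so ``budget'' considerations do not show that $j$'s $k$ inedges all come from $T$; an extra inedge to $j$ from outside $T$ does not push any vertex of $T$ below indegree $k$.

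The paper avoids all of this with a short local argument exploiting the rigidity of the indegrees, and this is the idea your proposal is missing. For any $v$ and any $w\in\Gamma^-(v)$, the $k$-set $\{v\}\cup(\Gamma^-(v)\setminus\{w\})$ can only be covered by $w$, because a coverer must be an inneighbour of $v$ lying outside the set and $w$ is the only one; hence $w$ dominates $\Gamma^-(v)\setminus\{w\}$, and so $\Gamma^-(v)$ induces a complete digraph. Taking $u\in\Gamma^-(v)$, the set $A=\Gamma^-(u)\cup\Gamma^-(v)$ has exactly $k+1$ elements, is complete except possibly for one pair, and one more application of the same observation (using $k\ge2$ to find a third vertex $w\in A$ with $\Gamma^-(w)=A\setminus\{w\}$) shows $A$ is complete; since each vertex of $A$ then has all $k$ of its inedges inside $A$, no edge enters $A$ from outside. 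Finally, for $x\notin A$ and $a,b\in A$, the $k$-set $\{x\}\cup A\setminus\{a,b\}$ can only be covered by $a$ or $b$ (any coverer is an inneighbour of some element of $A$, hence lies in $A$), so one of each pair sends an edge to $x$; therefore at least $k$ elements of $A$ do, and indegree exactly $k$ makes these all of $x$'s inedges. If you wish to salvage your global counting approach, you would have to feed the covering condition back in to exclude the ``$k$ big vertices'' configuration and to pin down the tail exactly, at which point you are essentially reconstructing this local argument anyway.
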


\begin{proof}
Every vertex has indegree at least $k$, or else any $k$-set
that contains a vertex of minimal indegree and all its inneighbours is not covered.  Thus $D$ has at least $kn$ edges.

If $D$ has exactly $kn$ edges then every vertex has indegree exactly $k$.  Given a vertex $v$, let $\Gamma^-(v)$ be the set of inneighbours of $v$.  For any $w\in\Gamma^-(v)$,
the only vertex that can cover $\{v\}\cup \Gamma^-(v)\setminus w$ is $w$ (as it is the only inneighbour of $v$ outside the set).  
It follows that $\Gamma^-(v)$ induces a complete 
directed graph.
Now pick any $u\in\Gamma^-(v)$: we have $|\Gamma^-(u)\cap\Gamma^-(v)|=|\Gamma^-(v)\setminus u|=k-1$.  Let $A=\Gamma^-(u)\cup\Gamma^-(v)$, so $|A|=k+1$ and
every pair of elements from $A$ is joined in both directions
except possibly $u,u'$, where $u'$ is the unique element of $\Gamma^-(u)\setminus\Gamma^-(v)$.
Since $k\ge2$, we have $|A|\ge3$ and so we can pick $w\ne u,u'$ in $A$: then $\Gamma^-(w)=A\setminus w$ induces a complete directed graph, and so $u,u'$ are also joined in both directions.
Thus $A$ is complete and, as $|A|=k+1$, no further edges can enter $A$.

Finally, for any $x\not\in A$ and any $a,b\in A$, consider the $k$-set $\{x\}\cup A\setminus\{a,b\}$: this can only be covered by $a$ or $b$, and so one of $a$ and $b$ must send an edge to $x$.  Since this holds for every pair of elements in $A$, it follows that $k$ elements of $A$ must direct edges to $x$.  We conclude that $D\in\mathcal A(n,k)$.
\end{proof}

Erd\H os and Moser \cite{EM70} also considered a generalization of the graph problem, where every vertex must be covered by many other vertices.
Let $h(n,k,r,s)$ be the minimum number of edges in an $r$-uniform hypergraph $\mathcal H$ with vertex set $[n]$ such that every $k$-set in $[n]$ is covered by at least $s$ 
distinct vertices in $\mathcal H$ (so $h(n,k,r,1)=f(n,k,r)$).   
Erd\H os and Moser \cite{EM70} considered the case $r=2$ and noted that, for $n>n_0(k,s)$, 
\begin{equation}\label{sis2}
h(n,k,2,s)=f(n,k+s-1,2);
\end{equation}
they further noted that this equality does not hold for every $n,k,s$ (so the assumption that $n>n_0$ is necessary).
It seems likely that our methods should be useful for this problem, although we note that \eqref{sis2} does not hold when 2 is replaced by $r$
if $k$ is close to $r$: for instance 
with $k=r=3$ and $s=3$ the graph $\mathcal G(n,k+s-1,r)=\mathcal G(n,5,3)$ has $3$ vertices of large degree, but is not extremal as the edge containing these three vertices can be removed. 

In a similar vein, we can consider a multicolour version of the problem: let $b(n,k,r,c)$ be the minimum number of edges in 
a graph on $n$ vertices, in which the edges are partitioned into $c$ classes such that every $k$-set is covered by the edges in each class.  
How does $b(n,k,r,c)$ behave for fixed $k,r,c$ and large $n$?

\bigskip

\noindent{\bf Acknowledgement.}  The authors would like to thank Noga Alon for his helpful comments, and the referee for a careful reading.


\begin{thebibliography}{99}

\bibitem{BT95} B. Bollob\'as and A. Thomason, Projections of bodies and hereditary
properties of hypergraphs, {\em Bull. London Math. Soc.} {\bf 27} (1995), 417--424.

\bibitem{B86} B. Bollob\'as, Combinatorics, Cambridge University Press (1986).

\bibitem{EM64} P. Erd\H os and L. Moser,
A problem on tournaments, {\em Canad. Math. Bull.} {\bf 7} (1964), 351--356.

\bibitem{EM70} P. Erd\H os and L. Moser, An extremal problem in graph theory,
{\em J. Aust. Math. Soc.} {\bf 11}  (1970), 42--47.

\bibitem{K14} P. Keevash, The existence of designs,  {\em preprint}, arXiv:1401.3665.

\bibitem{K68}
G.O.H. Katona, A theorem of finite sets, {\em in} Theory of Graphs, 
P. Erd\H os, and G.O.H. Katona (eds),
Akad\'emiai Kiad\'o and Academic Press (1968), 187--207.

\bibitem{K63}
J.B. Kruskal, The number of simplices in a complex, {\em in} Mathematical Optimization Techniques, R. Bellman (ed), University of California Press (1963), 251--278.

\bibitem{LW49}
L.H. Loomis and H. Whitney, An inequality related to the isoperimetric
inequality, {\em Bull. Amer. Math. Soc.} {\bf 55} (1949), 961--962.

\bibitem{L79}
L. Lov\'asz,
Combinatorial Problems and Exercises, 13.31,
North-Holland, Amsterdam (1979).

\bibitem{R85} V. R\"odl, On a Packing and Covering Problem, {\em European Journal of Combinatorics} {\bf 6} (1985), 69--78.

\end{thebibliography}
\end{document}